\newtheorem{theorem}{Theorem}[section]
\newtheorem{remark}[theorem]{Remark}
\DeclareMathOperator{\colim}{colim}
\newcommand{\R}{\mathbb{R}}
\newcommand{\A}{\mathbb{A}}
\newcommand{\X}{\mathbb{X}}
\newcommand{\PPP}{\mathbb{P}}
\def \HH{{\mathcal H}}
\def \DD{{\mathcal D}}
\def \Pe{{\mathcal P}}
\def \BB{{\mathcal B}}
\DeclareMathOperator{\im}{im}
\newcommand{\set}[1]{\{\,#1\,\}}
\newcommand{\x}{\mathbb{x}}
\newcommand{\Hg}{\mathrm{H}}
\newcommand{\rto}{\rightarrow}
\newcommand{\kk}{\mathrm{k}}
\newcommand{\meet}{\wedge}
\newcommand{\join}{\vee}
\newcommand{\from}{\leftarrow}
\newcommand{\bigmeet}{\bigwedge}
\newcommand{\bigjoin}{\bigvee}
\newcommand{\authorfootnotes}{\renewcommand\thefootnote{\@fnsymbol\c@footnote}}%
\let\start@align@nopar\start@align
\let\start@gather@nopar\start@gather
\let\start@multline@nopar\start@multline
\long\def\start@align{\par\start@align@nopar}
\long\def\start@gather{\par\start@gather@nopar}
\long\def\start@multline{\par\start@multline@nopar}
\begin{document}

\title{Aspects of an internal logic for persistence}
\author[JPC, P\v S and MVJ]{%
Jo\~ao Pita Costa,  
Primo\v z  \v Skraba,  
Mikael Vejdemo-Johansson }
\address{Proceedings of the Applied Topology Conference 2013\\ 
Journal of Topological Methods in Nonlinear Analysis}
\date{\today}


\begin{abstract} 
The foundational character of certain algebraic structures as Boolean algebras and Heyting algebras is rooted in their potential to model classical and constructive logic, respectively. In this paper we discuss the contributions of algebraic logic to the study of persistence  based on a new operation on the ordered structure of the input diagram of vector spaces and linear maps given by a filtration. Within the context of persistence theory, we give an analysis of the  underlying algebra, derive universal properties and discuss new applications. We highlight the definition of the implication operation within this construction, as well as interpret its meaning within persistent homology, multidimensional persistence and zig-zag persistence. 
\end{abstract}

\maketitle


\section{Introduction}
\label{Introduction}

Topological data analysis has been a vibrant area of research a lot due to the developments in applied and computational algebraic topology. 
It applies the qualitative methods of topology to problems of machine learning, data mining and computer vision \cite{TD}. 
Persistent homology, as the most widely applied tool from
computational algebraic topology has been applied  to problems in 
machine learning \cite{Ede13},
data mining \cite{Wag12},
robotics \cite{Str13}, 
social media \cite{Wag12}, 
large scale data \cite{Har12}, 
natural image statistics \cite{Car08},
nonlinear systems \cite{Cha13}, 
medicine and cancer research \cite{Nic11},
and development of more accurate models \cite{Bub12}.
In the past years several extensions of persistence were proposed, including zig-zag persistence~(cf. \cite{ZigZag1} and \cite{ZigZag2}), and multidimensional persistence~(cf. \cite{Multi}). 

Recently,  persistent homology has been studied using techniques from lattice theory \cite{Skr13} with several algorithmic applications and structural consequences \cite{SoCG14}. 
This is of particular interest for diagrams of vector spaces of different
shapes.  
With the appropriate definitions, such diagrams form lattices: partially ordered sets (posets) with uniquely determined greatest lower bounds and least upper bounds, encoded as additional binary operations named \emph{meet} and \emph{join}, denoted by $\meet$ and $\join$. 
These lattices are equipped with the structure of a \emph{Heyting algebra}: in addition to meet and join acting as conjunctive and disjunctive operators, they also admit an implication operator allowing them to serve as algebraic models of constructive logic in the same way as Boolean algebras model classical logic (cf. \cite{Skr13}).

Constructive logic replaces the traditional concept of truth with the concept of constructive provability and is associated with a transition from the proof to model theory of abstract truth where semantics mirrors classical Boolean-valued semantics using Heyting algebras in place of Boolean algebras (cf. \cite{Be06}). 
The proofs produced by constructive logic have the existence property,
making it suitable for the algorithmic construction of examples from a constructive proof of the existence of certain object.  
Already in \cite{Markov}, Markov expresses that the significance for mathematics of rendering more precise the concept of algorithm emerges in connection with a certain constructive foundation for mathematics, and the further applications of such work.
Heyting algebras are presented in \ref{Heyting algebras and Boolean algebras} and their associated constructive logic will be briefly discussed in subsection \ref{The logic of Heyting algebras}.
Due to its constructive nature, this logic presents a different perspective, as discussed in Section~\ref{An internal logic for persistence}.

In~\cite{Mik12} the relevance of Heyting algebras for the study of persistent homology is further clarified, in a wider
context of the unification by topos theory.
In this paper we present a further step in this direction of research
towards such topos theoretical foundations. Our main contribution is
the definition and interpretation 
of the implication operation over the underlying Heyting algebra constructed in \cite{Skr13}. 
In particular, we analyze the implication between two vector spaces of a given diagram in the context of standard, multidimensional and zig-zag persistence. 
The latter case is motivated by the lattice construction for zig-zag persistence from \cite{SoCG14}.
Furthermore, we will give an interpretation of the implication operation and discuss aspects of the correspondent internal logic given by the underlying algebra.

We assume that the reader has a basic familiarity with algebraic topological notions such as (co)homology, simplicial complexes, filtrations, etc. For an overview, we recommend the references \cite{Hat00} for algebraic topology, as well as \cite{Zom05} for applied and computational topology.


%
\section{Preliminaries}
\label{Preliminaries}

\subsection{Heyting algebras and Boolean algebras}
\label{Heyting algebras and Boolean algebras}

Partially ordered sets (or posets) are common objects of study in topology. 
A lattice is a poset such that all pairs of elements $x,y$ have a greatest lower bound, denoted by $x\meet y$, and least upper bound, denoted by $x\join y$. 
Lattices are posets with minimal conditions to acquire an algebraic structure given by the binary operations $\meet$ and $\join$ determined by the following axioms:
\begin{itemize}
\item [$(1)$] $x\meet(y\meet z)=(x\meet y)\meet z$  and $x\join (y\join z)=(x\join y)\join z$ (associativity);
\item [$(2)$] $x\meet y=y\meet x$ and $x\join y=y\join x$ (commutativity);
\item [$(3)$] $(y\meet x)\join x=x$ and $(y\join x)\meet x=x$ (absorption).
\end{itemize}
The equivalence between this algebraic perspective of a lattice $L$ and its ordered perspective is given by the following equivalence: for all $x,y\in L$, $x\leq y$ iff $x\meet y=x$ iff $x\join y=y$.

Other axioms may be added to this theory in order to describe other subclasses of lattices. 
The class of  \emph{distributive} lattices is determined by the axioms above together with one of the following equivalent axioms:
\begin{itemize}
\item[$(4a)$] $x\meet (y\join z)=(x\meet y)\join (x\meet z)$; 
\item[$(4b)$] $x\join (y\meet z)=(x\join y)\meet (x\join z)$;
\item[$(4c)$] $(x\join y) \meet (x\join z) \meet (y\join z) = (x\meet y)\join (x\meet z)\join (y\meet z)$. 
\end{itemize}
Examples of lattices include the power set of a set ordered by subset inclusion, or the collection of all partitions of a set ordered by refinement  (where the first is distributive but the second is not).
Hence, the order and the algebraic structures hold the same information over different perspectives. 
A lattice is \emph{complete} If every (possibly infinite) subset of a lattice has a supremum and an infimum.
All finite lattices are complete. 
Every lattice can be determined by a unique undirected graph for which the vertices are the lattice elements and the edges correspond to the partial order: the \emph{Hasse diagram} of the lattice. 
Whenever all elements are order-related, the poset is called a \emph{totally ordered set}. 
Totally ordered sets are always distributive lattices.

A \emph{Boolean algebra} $\BB$ is a distributive lattice with a least element $\perp$ and a greatest element $\top$ where, for all elements $a\in \BB$, there exists $b\in \BB$ such that $(a\join b) = \top$ and  $(a\meet b) = \perp$ ($b$ is called the \emph{complement} of $a$ and denoted by $\neg a$). 
Observe that $a\join \neg b$ is the greatest element $x$ satisfying the inequality $(a\meet x)\leq b$.
Though, if not all elements in a lattice have a complement, i.e. such an $x$ may not exist.
A \emph{Heyting algebra} is a distributive lattice $\HH$ with a least element $\perp$ such that, for all $a,b\in  \HH$ there exists a greatest element $x\in \HH$ satisfying $(a\meet x)\leq b$. 
This element is called the \emph{relative pseudo-complement} of $a$ with respect to $b$ denoted by $a\Rightarrow b$.
With it we define $\neg a$ as $a\Rightarrow \perp$ for all $a\in \HH$.  
The collection of all subsets of a given set, with intersection and union as $\meet$ and $\join$, is a Boolean algebra.
On the other hand, totally ordered sets and Heyting algebras are
examples of distributive lattices that in general are not Boolean algebras.
The Heyting algebras $\HH$ that are Boolean algebras are the ones satisfying $\neg \neg x=x$, for all $x\in \HH$.
Heyting algebras always have a greatest element given by $\perp \Rightarrow \perp$.
Boolean algebras are examples of Heyting algebras. 
Moreover, every complete distributive lattice constitutes a Heyting
algebra with the implication operation given by 
\[
x \Rightarrow y = \bigjoin \set{x\in L \mid (x \meet a) \leq b}.
\]

Any Heyting algebra must satisfy the \emph{infinite distributivity}
identity: 
\[
x\meet \bigjoin_{i\in I}y_{i}=\bigjoin_{i\in I}(x\meet y_{i})
\]

\subsection{The logic of Heyting algebras}
\label{The logic of Heyting algebras} 
 
To understand the role of Heyting algebras on their internal logic, we can compare them to Boolean algebras which model classical logic (cf. \cite{Joh86}).
In classical logic, the truth values that a formula can take are usually chosen as the members of a Boolean algebra. 
A corresponding theorem is true for constructive logic if, instead of assigning each formula a value from a Boolean algebra, one uses values from a Heyting algebra.
Consider a collection $\mathbb{P} = \set{p_1,\dots}$ of truth value variables and all logical propositions that can be built from these.
Given a Heyting algebra $\A = (A; \meet, \join, \Rightarrow, \perp)$ and an assignment $v:\mathbb{P}\to A$ of truth values in the Heyting algebra, we can extend the truth values recursively to the entire set of logical propositions by using the following rules: $v(\varphi \join \psi)=v(\varphi) \join v(\psi)$; $v(\varphi \meet \psi)=v(\varphi) \meet v(\psi)$; $v(\varphi \Rightarrow \psi)=(v(\varphi) \Rightarrow v(\psi))$; and $v(\perp)=\perp$.
A formula $\varphi$ is \emph{valid} in $\A$ under a valuation $v$ if $v(\varphi)=\top$; the formula $\varphi$ is valid in $\A$ if it is valid for every valuation $v$ of $\A$.
The axioms of constructive logic are thus determined by the smallest set of formulas that are valid in every Heyting algebra, while classical logic satisfies exactly the formulas that are valid in every Boolean algebra. 
The axioms of constructive logic may be given by the following list
\begin{itemize}
\item[CL 1.] $p\Rightarrow (q\Rightarrow p)$;
\item[CL 2.] $(p\Rightarrow (q\Rightarrow r))\Rightarrow ((p\Rightarrow q)\Rightarrow (p\Rightarrow r))$;
\item[CL 3.] $(p\meet q) \Rightarrow p$;
\item[CL 4.] $(p\meet q) \Rightarrow q$;
\item[CL 5.] $(p\Rightarrow p)\join q$;
\item[CL 6.] $(q\Rightarrow p)\join q$;
\item[CL 7.] $(p\Rightarrow r)\Rightarrow ((q\Rightarrow r)\Rightarrow ((p\join q)\Rightarrow r))$;
\item[CL 8.] $\perp \Rightarrow p$.
\end{itemize}
Indeed, the set of axioms of classical logic properly contains the set of axioms of constructive logic as classical logic includes $p\join \neg p$ and $(\neg \neg p)\Rightarrow p$ (cf. \cite{Be06}).
To recognise valid formulas, it is sufficient to consider the Heyting algebra of open subsets of the real line $\R$, where the lattice operations $\meet $ and $\join$ correspond to the set intersection and union, $\cap$ and $\cup$.
The value assigned to a formula $a\Rightarrow b$ is the interior of the union of the value of $B$ and the complement of the value of $A$, $int(A^C\cup B)$, the bottom element is the empty set, $\emptyset$ and the top element is the entire line $\R$.
The value of $\neg A$, defined by $A\Rightarrow \emptyset  $, reduces to the interior of the complement of the value of $A$, i.e., the exterior of $A$. 
With this setting the open sets of the real line constitute a Heyting algebra which is not a Boolean algebra, as the complement of an open set need not be open.
The formulas assigned to the value of the entire line $\R$ are exactly the valid constructive formulas.  
Conversely, for every invalid formula, there is an assignment of values to the variables that yields a valuation that differs from the top element $\top$. 
To read in more detail the general connection between logic, topology and Heyting algebras, please read \cite{Vik96}.


\section{A Heyting algebra of vector spaces}
\label{A Heyting algebra of vector spaces}

\subsection{Motivation} 
In \cite{Skr13}, natural lattice operations where  defined for
standard and multidimensional persistence, as well as constructed lattice operations for general diagrams. 
When considering the standard persistent homology, presented in \cite{Ede02}, we take a topological space $\X$ and a real-valued function $f:\X \rto \R$. The object of study of persistent homology is a filtration of a space $\X$, i.e., $\emptyset = \X_0 \subseteq \X_1 \subseteq \X_2 \subseteq \ldots\subseteq\X_{N-1} \subseteq \X_N = \X$.
Assuming that this is a discrete finite filtration of tame spaces, we
take the homology of each of the associated chain complexes and
obtain 
\[
 \Hg_*(\X_0) \rto  \Hg_*(\X_1) \rto  \Hg_*(\X_2) \rto \ldots\rto
 \Hg_*(\X_{N-1}) \rto  \Hg_*(\X_N).
\]
If homology is taken over a field $\kk$, the resulting homology groups are vector spaces and the induced maps are linear maps. 
The lattice operations $\meet$ and $\join$ of the underlying order structure can be defined as follows:
\[
(\Hg_*(\X_i) \join \Hg_*(\X_j)) = \Hg_*(X_{\max(i,j) })
\]
\[
(\Hg_*(\X_i) \meet \Hg_*(\X_j)) = \Hg_*(X_{\min(i,j) })
\]
The persistent homology groups can then be rewritten as follows: for
any two elements $\Hg_*(\X_i)$ and $ \Hg_*(\X_j)$, the rank of the
persistent homology classes is 
\begin{equation}\label{eq:persistence}
\im (\Hg_*(\X_i\meet\X_j) \rto \Hg_*(\X_i\join \X_j)).
\end{equation}
In the context of multidimensional persistence, such lattice operations in a bifiltration can naturally be defined by the following equations:  
\[
\Hg_*(\X_{i,j}) \join \Hg_*(\X_{k,\ell}) =
\Hg_*(X_{\max(i,k),\max(j,\ell) })
\]
\[
\Hg_*(\X_{i,j}) \meet \Hg_*(\X_{k,\ell}) =
\Hg_*(X_{\min(i,k),\min(j,\ell) })
\]

The lattice-theoretic definition of persistent homology groups agrees
with both the standard case and the rank invariant in multidimensional
persistence. 
This definition leads to a generalisation of persistence, enriching the poset of vector spaces and linear maps to which we call \emph{diagram} (of vector spaces and linear maps not necessarily sharing domains and codomains) with an underlying algebraic structure given by two lattice operations $\meet $ and $\join$ with nice properties like associativity or commutativity and an infinite notion of distributivity.

\subsection{Revisiting the lattice construction}
When considering an arbitrary commutative diagram of vector spaces and linear maps, a partial order can be introduced where the elements are those vector spaces, and the linear maps determine the order relations between them. 
To do so we consider a directed acyclic graph of vector spaces $G$ together with respective linear maps, assuming one unique component.
The partial order $\leq $ is thus given by $A\leq B \text{  if there exists a linear map  } f:A\rightarrow B \text{  in the input diagram}.$ 
The ordered structure is a poset correspondent to the linear maps in the commutative diagram of spaces given as input. We consider the equivalence of vector spaces denoted by $A\leftrightsquigarrow B$ if there is an isomorphism between $A$ and $B$, in order to, without loss of generality, identify all isomorphic structures (cf. \cite{Skr13}).
Note that the partial order as given does not yet constitute a lattice.
The construction of the lattice operations $\meet $ and $\join$ described in \cite{Skr13} extends the poset into a complete Heyting algebra. These constructions are based on direct sums and categorical limits and colimits.
To avoid dense notation, the extension of the partial order $\leq$ will be noted by the same symbol, being part of the larger partial order.
The operations are constructed as follows: take arbitrary elements $A$ and $B$ of the input poset. 
The \emph{meet} of $A$ and $B$, $A\meet B$, and the \emph{join} of $A$ and $B$, $A\join B$, correspond to the greatest lower bound of $\set{A,B}$ and the least upper bound of $\set{A,B}$, respectively.
Formally, given arbitrary spaces $X$ and $Y$ in a diagram $\DD$, 
\begin{itemize}
\item[(i)] $X\meet Y$ is the intersection of all pullbacks of common targets to $X$ and $Y$, a subspace of $X\oplus Y$ mapping to $X$ and $Y$ by projection;
\item[(ii)] $X\join Y$ is the quotient of $X\oplus Y$ by the sum of all kernels of projections onto pushouts of common sources to $X$ and $Y$, a quotient of $X\oplus Y$ such that either $X$ or $Y$ maps to this join by mapping through their direct sum.
\end{itemize}
Note that given another vector space $D$ such that $D\leq A,B$, it must be below $A\meet B$ due to its construction as a limit.
Hence, $A\meet B$ is the greatest lower bound and, similarly, $A\join B$ is the least upper bound of the set $\set{A,B}$ due the universality of its construction as a colimit.
Both of these operations extend to finite joins $\bigjoin_i D_i$ and meets $\bigmeet_i D_i$ (that might not be in $\DD$ but in the underlying lattice to which we complete $\DD$).
For a diagram $\DD$ and a collection $\set{D_i}$ of spaces, we have  
  \[
  X\meet Y = \lim\set{X\to Z\from Y : Z \text{ common target of $X$ and $Y$}}
  \]
  and also 
  \[
  X\join Y = \colim\set{X\from Z\to Y : 
    Z \text{ common source of $X$ and $Y$}}
  \]
The operations $\join $ and $\meet $ defined above determine a lattice of vector spaces: the partially ordered set $\Pe=(\PPP;\leq)$, where $\PPP$ is the closure of the input poset $P$ relative to these operations. We refer to it as the \emph{persistence lattice} of a given diagram of vector spaces and linear maps, i.e., the completion of that diagram into a lattice structure using the lattice operations $\join $ and $\meet $ (cf. \cite{Skr13}).
These constructions may be computed using  algorithms described in \cite{Skr13a} and in \cite{SoCG14}.
For \emph{persistence} in a general diagram we use the definition from
Equation~\ref{eq:persistence}: for any two elements $\X_i$ and $\X_j$, the rank of the persistent homology classes is 
\[ \im (\Hg_*(\X_i\meet\X_j) \rto \Hg_*(\X_i\join \X_j)). \]
In \cite{Skr13}, it was shown that persistence lattices are complete: due to the nature of their lattice operations, they can be defined in $X=\oplus_\ell \set{A_{\ell}\in S}$ to an arbitrary family of spaces $\{A_{\ell}\}$ in the input diagram. 
Moreover, they are distributive lattices, thus constitute a complete Heyting algebras.
Therefore,  the following infinite notion of distributivity 
\[
X\meet \bigjoin_{i\in I}Y_{i}=\bigjoin_{i\in I}(X\meet Y_{i}).
\]
is satisfied by the underlying algebra of any input diagram of vector spaces and linear maps.
This identity, known as the infinite distributive law, ensures commutativity of binary meets with infinite joins. 
To analyze the construction of the implication operation consider an arbitrary family of spaces $\{X_{\ell}\}$, and the colimit 
\[
A \Rightarrow B=\bigjoin \set{X_{\ell}\in L \mid \text{  a linear map  }
  \bigoplus_{\ell} (X_{\ell}\meet A) \rightarrow B \text{  exists }}.
\]
This general construction of the implication operation permits us a global perspective, enabling techniques as the algorithm for the greatest injective discussed in \cite{Skr13} and \cite{SoCG14}.

\subsection{An internal logic for persistence} 
\label{An internal logic for persistence} 

For constructive mathematics, the existence of an object is equivalent to the possibility of its construction and, unlike the classical approach, the existence of an entity cannot be proved by refuting its non-existence \cite{Jap07}. 
While in classical logic, the negation of a statement asserts that the statement is false, for constructivism it must be refutable and thus $P$ is a stronger statement then $\neg \neg P$.
In particular, the law of excluded middle, \emph{A {\bf or} not A}, is
not accepted as a universally valid principle, although \emph{A {\bf and} not A} is still not true (cf. \cite{Hey56}).
Hence, constructive mathematics  differs from classical mathematics, the former being more appropriate to computability.  
%
Therefore,  a constructive mathematical framework is more 
computational  in the sense that it provides certificates of
existence in the form of algorithmic constructions. 

\begin{remark}\label{axm}
The list below exhibits several properties satisfied by the vector spaces in the underlying structure of any persistence lattice.
They derive from properties satisfied by any Heyting algebra, due to \cite{Be06}, \cite{Bo94}, \cite{Mac92} and \cite{Hey56}.
\begin{itemize}
\item[(1)] $(A\Rightarrow A)=\top$; 
\item[(2)] $(A\meet (A\Rightarrow B))=(A\meet B)$; 
\item[(3)] $(B\meet (A\Rightarrow B))=B$; 
\item[(4)] $(A\Rightarrow (B\meet C))=((A\Rightarrow B)\meet (A\Rightarrow C))$;  
\item[(5)] $A\leq (B\Rightarrow A)$ and $\neg A \leq (A\Rightarrow B)$;
\item[(6)] $(A\Rightarrow (B\Rightarrow C))\leq ((A\Rightarrow B)\Rightarrow (A\Rightarrow C))$;
\item[(7)] $(A\Rightarrow C)\leq ((B\Rightarrow C)\Rightarrow ((A\join B)\Rightarrow C))$;
\item[(8)] $(A\Rightarrow B) \leq ((A\Rightarrow \neg B) \Rightarrow \neg A)$;
\item[(9)] $((A\meet B)\Rightarrow C)=(A\Rightarrow (B\Rightarrow C))$;
\item[(10)] $(A\Rightarrow (B\Rightarrow (A\join B)))=\top$;
\item[(11)] $(A\Rightarrow (A\join B))=\top$ and $(B\Rightarrow (A\join B))=\top$;
\item[(12)] $((A\Rightarrow C)\Rightarrow ((B\Rightarrow C)\Rightarrow ((A\join B)\Rightarrow C)))=\top$; 
\item[(13)] $(0 \Rightarrow A) =\top$;
\item[(14)] if $(A\Rightarrow B)=\top$ and $(B\Rightarrow A)=\top$ then $A=B$;
\item[(15)] if $(\top\Rightarrow B)=\top$ then $B=\top$;
\item[(16)] $(A\meet \neg A) =0$ and $(A\join \neg A)=\top$;
\item[(17)] if $A\leq B$ then $\neg B\leq \neg A$;
\item[(18)] $A\leq \neg \neg A$ and $\neg A = \neg \neg \neg A$;
\item[(19)] if $A$ has a complement, it must be $\neg A$;
\item[(20)] the lattice of vector spaces $\HH$ is a Boolean algebra iff $\neg \neg A$, for all $A\in \HH$.
\end{itemize}
\end{remark}
Note that (9) and (10) above, are results following directly from the definition of the pseudo-complement. 
Others, like (11), (12) and (13) follow directly from the above (CL 3), (CL 4), (CL 7) and (CL 8) presented in the subsection \ref{The logic of Heyting algebras}.
Moreover, (16) to (20), specify $\neg$ and tell us how close we are from dealing with a Boolean algebras. 

Such universal laws permit the simplification of algebraic expressions involving the implication operation in the framework of persistence, as shown in subsection \ref{Zig-zag case}.    
Surely our achievement is not the proof of these laws, but rather the construction of a Heyting algebra on persistence such that we can eventually use these laws that hold for Heyting algebras in general.
They illustrate a certain logic embodiment internal to this algebra.

\section{Computing aspects of the internal logic}
\label{Computing internal logic aspects}

\subsection{Interpretation of the implication operation}
The main contribution of this paper is to interpret the \emph{implication
operation}  for the persistence lattice.
A Heyting algebra describes the order structure of the collection of open sets of a topological space (cf. \cite{Joh86}). 
In such a model, \textit{modus ponens} is the main property expected from the implication: given $U$ and $U\Rightarrow W$ we may infer $W$, an entailment relation that can be expressed as $(U\meet (U\Rightarrow W))\leq W$. The implication is asked to be the weakest possible such assumption. 
The exponential $U\Rightarrow V$, for arbitrary open sets $U$ and $V$ can be expressed by the union $\bigcup W_i$ of all open sets $W_i$ for which $W_i\cap U\subset V$ as follows: as the intersection is distributive over arbitrary unions, $(\bigcup W_i)\cap U=\bigcup (W_i\cap U)\subset V$ and, thus, $\bigcup W_i = (U\Rightarrow V)$.
In the framework of persistence lattices, given arbitrary vector spaces $A$ and $B$, $A\Rightarrow B$ is the join of all elements of the lattice $C_i$ such that a linear map $f: (C_i\meet A) \rightarrow B$ exists.
Indeed, whenever $A$ and $B$ are vector spaces in a diagram, there exists a vector space $X$ that is maximal in the sense of $(X\meet A)\leq B$, i.e., in the sense of the existence of a map $\chi$ as in the diagram below:

\begin{center}
\begin{tikzpicture}[scale=.5]

  \node (d) at (0,-2.5) {$A\meet B$};
  \node (a) at (-2.5,0) {$A$};
  \node (b) at (5,2.5) {$X=(A\Rightarrow B)$};
  \node (x) at (2.5,0) {$B$};
  
\draw[arrows=-latex'] (d) -- (a) node[pos=.5,left] {$\phi_{A}$};
\draw[arrows=-latex'] (x) -- (b) node[pos=.5,right] {$\chi$};
\draw[arrows=-latex'] (d) -- (x) node[pos=.5,right] {$\phi_{B}$};

\end{tikzpicture}
\end{center}
Observe that $(A\meet X) \leq B$ so that $A\meet X=(A\meet X)\meet B=A\meet (X\meet B)=A\meet B$.   
The following results describe the implication operation in several different cases, within the framework of persistence.

\subsection{Standard persistence case} 

Recall that all complete totally ordered sets are bounded by a largest
element and a smallest element, denoted by $\top$ and $\perp$ respectively. 
In the context of a filtration of topological spaces, $\top$ corresponds
to the final or terminal space while $\perp$ corresponds to the initial
space or the empty set.
We have the following description of the implication operation:

\begin{theorem}\label{std}
Let $\HH$ be the underlying Heyting algebra of a totally ordered filtration and $\X_i\in \HH$ for all $i$ in a set of indexes $I$. 
Then, 
\[
\X_i\Rightarrow \X_j = \begin{cases} \X_j, & \mbox{if } \X_j\leq
  \X_i\\ \top, & \mbox{if } \X_i\leq \X_j \end{cases}
\]
\end{theorem}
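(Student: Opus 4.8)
The plan is to read off $\X_i \Rightarrow \X_j$ straight from its characterization as the relative pseudo-complement: by definition it is the greatest $x \in \HH$ with $(\X_i \meet x) \leq \X_j$, and by completeness (established earlier for persistence lattices) this greatest element exists as $\bigjoin \set{x \in \HH \mid (\X_i \meet x) \leq \X_j}$. Equivalently one can unwind the colimit formula for $A \Rightarrow B$ recorded in the previous subsection, noting that for a filtration the structural map $(x \meet \X_i) \to \X_j$ exists precisely when $(x \meet \X_i) \leq \X_j$. The single structural fact I would exploit is that, because the filtration is totally ordered, $\HH$ is a chain: any two elements are comparable and $u \meet v$ is simply the smaller of $u$ and $v$.

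With this in hand I would split on the two comparability cases. When $\X_i \leq \X_j$, every $x \in \HH$ satisfies $(\X_i \meet x) \leq \X_i \leq \X_j$, so the defining condition is vacuously true for all $x$; hence the join ranges over all of $\HH$ and equals the top element, giving $\X_i \Rightarrow \X_j = \top$. When $\X_j < \X_i$, I would show the condition $(\X_i \meet x) \leq \X_j$ holds if and only if $x \leq \X_j$: if $x \leq \X_j$ then $x \leq \X_j \leq \X_i$ forces $\X_i \meet x = x \leq \X_j$, while if $x > \X_j$ then $\X_i \meet x = \min(\X_i, x) > \X_j$ since both $\X_i$ and $x$ strictly exceed $\X_j$. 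The greatest $x$ meeting the condition is therefore $\X_j$ itself, so $\X_i \Rightarrow \X_j = \X_j$.

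There is no serious obstacle here; the entire content is the routine case analysis above, and the only points needing a word of care are (i) the existence of the supremum, which is guaranteed by the completeness of the persistence lattice proved earlier, and (ii) the behaviour on the diagonal $\X_i = \X_j$, where the two branches overlap. In that degenerate case the second branch's argument breaks down, since a large $x$ gives $\X_i \meet x = \X_i = \X_j \leq \X_j$, and the correct value is $\X_i \Rightarrow \X_i = \top$ by Remark~\ref{axm}(1). So the clean formulation is that the value is $\X_j$ exactly when $\X_j < \X_i$ strictly and $\top$ when $\X_i \leq \X_j$, which is how I would phrase the two cases to avoid ambiguity at equality.
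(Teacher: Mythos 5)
Your proof is correct. The paper itself gives no formal proof of Theorem~\ref{std} --- only the interpretive paragraph that follows it --- so your direct computation from the pseudo-complement characterization $\X_i\Rightarrow\X_j=\bigjoin\set{x\in\HH\mid(\X_i\meet x)\leq\X_j}$, together with the observation that in a chain $u\meet v=\min(u,v)$, supplies exactly the argument the paper leaves implicit. Your remark about the diagonal is also a genuine (if minor) sharpening: as printed, the two branches overlap at $\X_i=\X_j$ and disagree there, and the first case should indeed read $\X_j<\X_i$ strictly, with equality absorbed into the $\top$ branch via $(A\Rightarrow A)=\top$.
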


\noindent In this case, given vector spaces $A$ and $B$ in the filtration, $A\Rightarrow B$ gives us the total space in the filtration, the biggest element $\top$ of the respective lattice, when a linear map $A\rightarrow B$ exists in the input diagram; or else $B$ whenever a linear map $B\rightarrow A$ exists in the input diagram.  
Hence, in such a totally ordered set the pseudo-complement of $A$ is always zero, i.e., $\neg A=(A\Rightarrow \perp)=\perp$ .

\subsection{Multidimensional persistence case}
For the multidimensional persistence case we focus on a bifiltration,
that is, a filtration in two parameters.  
We assume that in general the commutative squares are bicartesian, so that the pushouts and pullbacks are the edges of the original diagram.
In this case, the
description of the implication operation when two elements are related is
similar to the description above for the standard persistence case.   

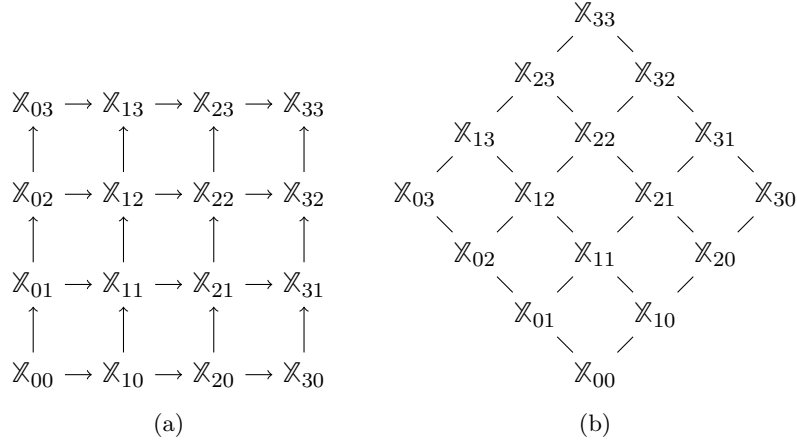
\begin{figure}
\centering
\subfigure[]{%
     \begin{tikzpicture}[scale=.6] 
         \foreach \i  [evaluate=\i as \x using int(2*\i)] in {0,...,3}{
           \foreach \j [evaluate=\j as \y using int(2*\j)] in {0,...,3}{
             \node (p\i\j) at (\x,\y) {$\X_{\i\j}$}    ;
           }
         }
         \foreach \i [evaluate=\i as \x using int(\i+1)] in {0,...,2}{
           \foreach \j [evaluate=\j as \y using int(\j+1)]  in {0,...,2}{
             \draw[->] (p\i\j) -- (p\i\y)  ;
             \draw[->] (p\i\j) -- (p\x\j)  ;
           }
         }
         \foreach \j [evaluate=\j as \y using int(\j+1)]  in {0,...,2}{
           \draw[->] (p3\j) -- (p3\y)  ;
           \draw[->] (p\j3) -- (p\y3)  ;
           }
     \end{tikzpicture}
}     
 \quad
 \subfigure[]{%
     \begin{tikzpicture}[scale=.8]

  \node (03) at (-3,1){$\X_{03}$} ;
  \node (13) at (-2,2){$\X_{13}$} ;
  \node (23) at (-1,3){$\X_{23}$} ;
  \node (33) at (0,4){$\X_{33}$} ;
  \node (32) at (1,3){$\X_{32}$} ;  
  \node (31) at (2,2){$\X_{31}$} ;
  \node (30) at (3,1){$\X_{30}$} ;  
  \node (22) at (0,2){$\X_{22}$} ;
  \node (12) at (-1,1) {$\X_{12}$} ; 
  \node (21) at (1,1){$\X_{21}$} ;
  \node (02) at (-2,0){$\X_{02}$} ;
  \node (20) at (2,0){$\X_{20}$} ;
  \node (11) at (0,0) {$\X_{11}$} ;
  \node (01) at (-1,-1){$\X_{01}$} ;
  \node (10) at (1,-1) {$\X_{10}$} ;
  \node (00) at (0,-2) {$\X_{00}$} ;
  \draw (00) -- (01) -- (02) -- (03) -- (13) -- (23) -- (33) -- (32) -- (31)-- (30) -- (20) -- (10) -- (00);
  \draw (02) -- (12) -- (22) -- (32);
  \draw (01) -- (11) -- (21) -- (31);
  \draw (10) -- (11) -- (12) -- (13);      
  \draw (20) -- (21) -- (22) -- (23);  
\end{tikzpicture}
}
\caption{The diagram of a bifiltration of dimensions $4\times 4$ (a) and the Hasse diagram of the correspondent underlying Heyting algebra (b).}
\label{figmultidim}       
\end{figure}     
Consider the Hasse diagram of the underlying algebra correspondent to a bifiltration of dimensions $4\times 4$ in Figure \ref{figmultidim}.
In that diagram, $\X_{01}\leq \X_{31}$ and thus $(\X_{01}\Rightarrow \X_{31})=\X_{33}=\top$ while $(\X_{31}\Rightarrow \X_{01})=\X_{01}$ 
For unrelated elements in the diagram above, as $\X_{02}$ and $\X_{11}$ for instance, we get $(\X_{02}\Rightarrow \X_{11}) = \X_{31}$ and $(\X_{11}\Rightarrow \X_{02})=\X_{03}$.
Moreover, the pseudo-complement has nontrivial behavior: clearly $(\X_{03}\Rightarrow 0)=\X_{30}$  as well as $(\X_{30}\Rightarrow \perp)=\X_{03}$ but $(\X_{20}\Rightarrow 0)=\X_{03}$ also. 
On the other hand, $(\X_{11}\Rightarrow \perp)=\perp$.
\begin{figure}
\centering
\begin{tikzpicture}[scale=0.6]

    \node (0) at (0,-3) {$\X_{00}$} ;
    \node (nm) at (0,3) {$\X_{mn}$} ;
    \node (0n) at (-3,0) {$\X_{0n}$} ;
    \node (m0) at (3,0) {$\X_{m0}$} ;

    \node (1) at (-1,-2) {} ;
    \node (2) at (1,-2) {} ;
    \node (3) at (-2,-1) {} ;
    \node (4) at (0,-1) {} ;
    \node (5) at (2,-1) {} ;
    \node (6) at (-1,0) {} ;
    \node (7) at (1,0) {} ;
    \node (8) at (-2,1) {} ;
    \node (9) at (0,1) {} ;
    \node (10) at (2,1) {} ;
    \node (11) at (-1,2) {} ;
    \node (12) at (1,2) {} ;
        
    \draw (0) -- (1) -- (3) -- (0n) -- (8) -- (11) -- (nm) -- (12) -- (10) -- (m0) -- (5) -- (2) -- (0);
    \draw (12) -- (9) -- (6) -- (3) -- (1) -- (4) -- (7) -- (10) -- (12) ;
    \draw (8) -- (6) -- (4) -- (2) -- (5) -- (7) -- (9) -- (11) -- (8) ;
       
\end{tikzpicture}
\caption{The Hasse diagram of the underlying Heyting algebra of a bifiltration of dimensions $m\times n$.}
\label{figmultidimx}       
\end{figure}
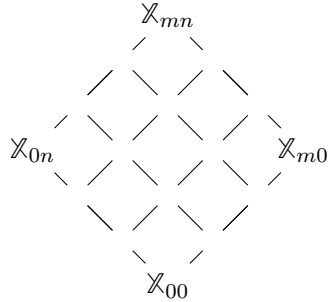     

In general, consider a bifiltration with dimensions $n\times m$ with the corresponding underlying algebra represented by the Hasse diagram of Figure \ref{figmultidimx}.
Let $A=\X_{xy}$ and $B=\X_{zw}$ be arbitrary related vector spaces in the given persistence lattice. 
Notice that, if $A\leq B$ or $B\leq A$, $\top$ is the biggest element of the correspondent totally ordered set where $A$ and $B$ belong, that in the case of the underlying algebra of such a bifiltration is $\X_{mn}$.  
Note also that $x\leq z$ and $y\leq w$ together imply that $A\leq B$.
Similarly, the inequality $A\geq B$ is implied by $x\geq z$ and $y\geq w$ together. 
Considering this, the following result can express the implication operation for such a general diagram.

\begin{theorem}\label{bif}
Let $\HH$ be the underlying Heyting algebra of a bifiltration and let $\X_{ij}$ be arbitrary unrelated vector spaces in the given persistence lattice. 
Assume without loss of generality that $x\leq z$ and $y\geq w$.
Then, 
\[
 \X_{xy} \Rightarrow \X_{zw} = \begin{cases}  \top=\X_{mn}, & \mbox{if
   } \X_{xy}\leq \X_{zw} \\ \X_{zw}, & \mbox{if   } \X_{zw}\leq
   \X_{xy}\\  \X_{mw} & \text{otherwise.  } \end{cases}.
\]
\end{theorem}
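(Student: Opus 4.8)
The plan is to compute $\X_{xy}\Rightarrow\X_{zw}$ directly from the join-formula for implication established in the previous subsection, namely that $\X_{xy}\Rightarrow\X_{zw}$ is the join of all lattice elements $\X_{ab}$ for which $(\X_{ab}\meet\X_{xy})\leq\X_{zw}$. Using the bifiltration meet rule $\X_{ab}\meet\X_{xy}=\X_{\min(a,x),\min(b,y)}$ together with the product order ($\X_{pq}\leq\X_{rs}$ iff $p\leq r$ and $q\leq s$), I would rewrite the defining membership condition on $\X_{ab}$ as the pair of scalar inequalities $\min(a,x)\leq z$ and $\min(b,y)\leq w$. The whole problem then reduces to identifying which grid indices $(a,b)$ satisfy these two inequalities and taking the join of the corresponding elements, which by the join rule $\X_{ab}\join\X_{a'b'}=\X_{\max(a,a'),\max(b,b')}$ is the element whose indices are the coordinatewise maxima over the admissible set.

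Next I would invoke the normalisation $x\leq z$, $y\geq w$. The inequality $\min(a,x)\leq z$ then holds for every $a$, since $\min(a,x)\leq x\leq z$, so the first coordinate is completely unconstrained and contributes its maximal value $m$ to the join. For the second coordinate, the unrelatedness of $\X_{xy}$ and $\X_{zw}$ forces $y>w$ strictly, and then $\min(b,y)\leq w$ holds precisely when $b\leq w$: if $b\leq w$ then $\min(b,y)=b\leq w$, whereas if $b>w$ then both $b$ and $y$ exceed $w$ and $\min(b,y)>w$. Hence the admissible set is exactly $\{\X_{ab}:0\leq a\leq m,\ 0\leq b\leq w\}$, whose join is $\X_{mw}$. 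I would close this case by checking that $\X_{mw}$ is genuinely the greatest such element: $\X_{mw}\meet\X_{xy}=\X_{x,w}\leq\X_{zw}$, so it lies in the admissible set, and every admissible $\X_{ab}$ satisfies $\X_{ab}\leq\X_{mw}$, so it dominates the set. This yields the ``otherwise'' value $\X_{mw}$.

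To round out the displayed case split I would treat the degenerate boundaries of the normalisation. When $y=w$ (equivalently $\X_{xy}\leq\X_{zw}$, since already $x\leq z$) the second inequality $\min(b,y)\leq w$ also becomes vacuous, the admissible set is the entire grid, and its join is $\X_{mn}=\top$; this is consistent with Theorem~\ref{std} and with the general Heyting identity that $A\leq B$ implies $A\Rightarrow B=\top$ (cf. item~(11) of Remark~\ref{axm}). The remaining listed case $\X_{zw}\leq\X_{xy}$ cannot occur under the standing unrelatedness hypothesis, so it is vacuous. As an independent conceptual check I would note that the $m\times n$ grid is the product of the two chains underlying the separate filtration directions, that Heyting implication in a product of Heyting algebras is computed coordinatewise, and that applying the chain formula of Theorem~\ref{std} in each factor gives top in the first coordinate (since $x\leq z$) and $w$ in the second (since $w<y$), again recovering $\X_{mw}$.

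The main obstacle I anticipate is the bookkeeping around the two $\min$ operations after normalisation: one must verify that the sign pattern $x\leq z$, $y\geq w$ is exactly what decouples the two coordinates, freeing the first index up to $m$ while pinning the second at $w$, and then confirm that the resulting index set has $\X_{mw}$ as its actual maximum rather than merely as an upper bound. Everything else is a routine translation between the order-theoretic and the index descriptions of the bifiltration lattice.
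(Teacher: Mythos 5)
Your proof is correct and, for the main ``otherwise'' case, it is essentially a fully worked-out version of what the paper merely asserts: the paper's proof simply declares that the greatest $X$ with $(A\meet X)\leq B$ is ``the element of maximal $i$ right above $B$, that is $\X_{mw}$,'' whereas you derive this from the join formula for $\Rightarrow$ by translating $(\X_{ab}\meet\X_{xy})\leq\X_{zw}$ into the index inequalities $\min(a,x)\leq z$ and $\min(b,y)\leq w$, identifying the admissible set as $\{(a,b): b\leq w\}$, and checking that its join $\X_{mw}$ actually lies in that set. Your second route --- viewing the grid as a product of two chains and computing the Heyting implication coordinatewise via Theorem~\ref{std} --- is genuinely different from anything in the paper and is the cleaner argument. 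Where you diverge on the related cases: the paper ``reduces'' them to the totally ordered case, while you get the $\top$ case directly from the formula and dismiss $\X_{zw}\leq\X_{xy}$ as vacuous under the unrelatedness hypothesis. That dismissal is defensible given the theorem's stated hypothesis, but note that your own method exposes a problem with that case: under the normalization $x\leq z$, the relation $\X_{zw}\leq\X_{xy}$ forces $x=z$, and both the join formula and the coordinatewise computation then return $\X_{mw}$, not the claimed $\X_{zw}$. The paper's reduction of the case $B\leq A$ to a chain is not valid, because the join defining $A\Rightarrow B$ ranges over the whole grid rather than over the chain containing $A$ and $B$; indeed the paper's own example $(\X_{31}\Rightarrow\X_{01})=\X_{01}$ is contradicted by the formula, which gives $\X_{03}$ since $\X_{03}\meet\X_{31}=\X_{01}\leq\X_{01}$. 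So your argument is sound for everything it claims; the discrepancy lies in the paper's second case, not in your proof.
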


\begin{proof}
If $A=\X_{xy}$ and $B=\X_{zw}$ are such that $A\leq B$ or $B\leq A$, we fall into the case of totally ordered sets similar to the case of standard persistence, giving us $\top$ or $B$, respectively.
On the other hand, we can determine the implication operation for unrelated elements in the context of this bifiltration by the biggest element $\X$ such that $(A\meet \X) \leq B$, the element of maximal $i$ (or $j$ if $x\geq z$) right above $B$: that is $\X_{mw}$.
\end{proof}

\begin{remark}
The implication operation gives us the element of maximal uncertainty with respect to a direction as in the standard case. 
Due to its generality, an analogous description holds in higher order filtrations as well as in arbitrary diagrams (although in arbitrary diagrams it does not necessarily have such a nice form).
\end{remark}

A natural decomposition of a vector space of the underlying lattice structure $\HH$ is given by the lattice operations in the following sense: when given $A\in \HH$ one can say that $A=(B\join C)$ for some $B,C\in \HH$.
A similar decomposition can be described for the meet operation $\meet$. An element $A\in \HH$ is called \emph{join-irreducible} if, for all $B,C\in \HH$ such that $A=(B\join C)$, we get $B=A$ or $C=A$. 
Observe that the join-irreducible elements of the persistence lattice of the bifiltration of dimension $2\times 2$ above are $\perp$, $A$, $D$, $F$ and $E$.
For all the others, a decomposition as a join of distinct elements is possible: for instance $G=(A\join B)$.
In general, the join-irreducible elements of a persistence lattice of a bifiltration of dimension $m\times n$ are the elements of the totally ordered sets $\X_{00}\rightarrow \dots \rightarrow \X_{0n}$ and $\X_{00}\rightarrow \dots \rightarrow \X_{m0}$.
Hence, the elements that admit a nonzero pseudo-complement coincide with the join-irreducible elements as described in the next result.
It exhibits a deeper relation between irreducibility and pseudo-complements in Heyting algebras. 
\begin{theorem}
The only elements having nonzero pseudo-complements are the elements of the totally ordered sets $\X_{00}\rightarrow \dots \rightarrow \X_{0n}$ and $\X_{00}\rightarrow \dots \rightarrow \X_{m0}$ thus coinciding with the join-irreducible elements.
\end{theorem}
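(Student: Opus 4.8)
The plan is to reduce the statement to a direct computation with the componentwise operations of the bifiltration, namely $\X_{xy}\meet\X_{zw}=\X_{\min(x,z),\min(y,w)}$ and $\X_{xy}\join\X_{zw}=\X_{\max(x,z),\max(y,w)}$, together with the identifications $\perp=\X_{00}$ and $\top=\X_{mn}$. I would \emph{not} invoke Theorem \ref{bif} directly: that result governs the implication between unrelated elements, whereas the pseudo-complement $\neg A=(A\Rightarrow\perp)$ involves the bottom element, which is comparable to every element. Instead I would return to the defining universal property, so that $\neg A$ is the greatest $X$ with $A\meet X\leq\perp$, equivalently, since $\perp$ is least, the greatest $X$ with $A\meet X=\X_{00}$. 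Such a greatest element exists and is attained because the set of candidates is closed under arbitrary joins, by the infinite distributivity law recorded in Section \ref{A Heyting algebra of vector spaces}.

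First I would translate the condition $\X_{xy}\meet\X_{zw}=\X_{00}$ into the pair of numerical conditions $\min(x,z)=0$ and $\min(y,w)=0$, and then, for fixed $A=\X_{xy}$, maximise $\X_{zw}$ in the componentwise order subject to these constraints. This yields a short case analysis: if $x>0$ and $y>0$ the constraints force $z=w=0$, so $\neg A=\X_{00}=\perp$; if $x=0<y$ only $w=0$ is forced and $z$ is free, so the greatest solution is $\neg\X_{0y}=\X_{m0}$; symmetrically $\neg\X_{x0}=\X_{0n}$ for $x>0$; and $\neg\X_{00}=\top$. I would cross-check these against the worked $4\times4$ examples in the text (e.g.\ $\neg\X_{03}=\X_{30}$, $\neg\X_{20}=\X_{03}$, $\neg\X_{11}=\perp$). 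The upshot is that $\neg A\neq\perp$ exactly when $A$ lies on one of the two axes $\X_{00}\to\dots\to\X_{0n}$ or $\X_{00}\to\dots\to\X_{m0}$, which proves the first assertion.

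For the coincidence with join-irreducibility I would argue directly from the join formula. Any interior element $\X_{xy}$ with $x,y>0$ decomposes nontrivially as $\X_{(x-1)y}\join\X_{x(y-1)}=\X_{xy}$, with both summands strictly below $\X_{xy}$, hence is join-reducible. Conversely, in any decomposition $\X_{0y}=\X_{zw}\join\X_{z'w'}$ with both terms $\leq\X_{0y}$ one is forced to have $z=z'=0$ and $\max(w,w')=y$, so one summand already equals $\X_{0y}$; thus every axis element, and vacuously $\perp=\X_{00}$, is join-irreducible. This matches the join-irreducible description recorded just before the statement, and comparing the two sets gives the claimed coincidence.

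The computations are essentially routine; the only point requiring care is the boundary bookkeeping at the origin and along the edges, in particular checking that the free coordinate in the two axis cases really can be pushed all the way to $m$ (resp.\ $n$) while keeping the meet equal to $\X_{00}$, so that the join defining $\neg A$ is attained at a genuine lattice element. I would also flag explicitly that Theorem \ref{bif} cannot be used as a black box for $\neg$, since its hypothesis excludes the comparable pair $(A,\perp)$; this is the natural trap, as a naive application of the branch giving $\X_{zw}$ when $\X_{zw}\leq\X_{xy}$ would wrongly yield $\neg A=\perp$ for every $A$.
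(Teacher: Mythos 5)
Your argument is correct, and it is worth noting that the paper itself supplies no proof of this statement: the theorem is asserted on the strength of the three worked examples in the $4\times 4$ grid ($\neg\X_{03}=\X_{30}$, $\neg\X_{20}=\X_{03}$, $\neg\X_{11}=\perp$) and the preceding informal identification of the join-irreducible elements. Your proposal therefore does more than the text does. The core of your argument --- reducing $\neg\X_{xy}=\bigjoin\set{\X_{zw}\mid \min(x,z)=0 \text{ and } \min(y,w)=0}$ to the four-way case split on whether $x$ and $y$ vanish, with the join attained at a lattice element because the candidate set is closed under joins by infinite distributivity --- is exactly the computation the paper leaves implicit, and your join-irreducibility argument (interior elements decompose as $\X_{(x-1)y}\join\X_{x(y-1)}$, axis elements force one summand to be the whole) is the right elementary justification for the claim the paper merely states. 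Your warning against using Theorem \ref{bif} as a black box for $\neg A$ is also well taken, and in fact understates the issue: the branch $\X_{xy}\Rightarrow\X_{zw}=\X_{zw}$ for $\X_{zw}\leq\X_{xy}$ already fails in boundary cases where one coordinate of $B$ equals the corresponding coordinate of $A$ (e.g.\ $\X_{21}\Rightarrow\X_{20}=\X_{30}$, not $\X_{20}$), which is precisely the phenomenon that makes the axis elements have nonzero pseudo-complements. The only cosmetic point: you might state explicitly that $\X_{00}$ itself is covered ($\neg\perp=\top\neq 0$, and $\perp$ is join-irreducible under the paper's definition), since the theorem's two chains both start at $\X_{00}$.
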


\subsection{Zig-zag persistence case}
\label{Zig-zag case}

Using the constructions in \cite{SoCG14}, we give a definition of the implication operation in the case of zig-zag persistence.
The interpretation of the implication operation in this case brings us to the analysis of a diagram representing an underlying algebra similar to the one achieved in the multidimensional case, considering bifiltrations.
A zig-zag module is \emph{normalized} if the arrows alternate in direction. Any zig-zag can be transformed in a normalized zig-zag module simply by introducing copies of modules and identity maps in appropriate directions.
Consider the normalized zig-zag module given in \cite{Sk12} by $\X_{0} \rightarrow \X_{01}\leftarrow \X_{1}\rightarrow \X_{12}\leftarrow \X_{2}\rightarrow \X_{23}\leftarrow \X_{3}$ with the diagram of Figure \ref{figzig}. 

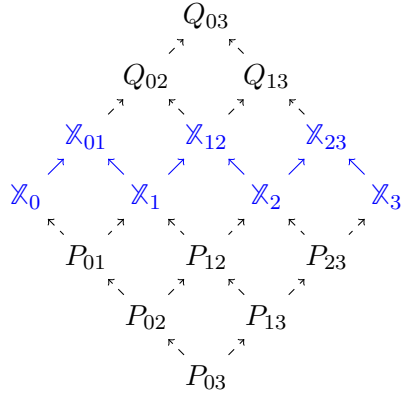
\begin{figure}
\centering
     \begin{tikzpicture}[scale=0.8]
       \node (x0) at (0,0) {\textcolor{blue}{$\X_0$}};
        \node (x1) at (2,0) {\textcolor{blue}{$\X_1$}};
        \node (x2) at (4,0) {\textcolor{blue}{$\X_2$}};
        \node (x3) at (6,0) {\textcolor{blue}{$\X_3$}};
       \node (x01) at (1,1) {\textcolor{blue}{$\X_{01}$}};
        \node (x12) at (3,1) {\textcolor{blue}{$\X_{12}$}};
        \node (x23) at (5,1) {\textcolor{blue}{$\X_{23}$}};
        \draw[->,blue] (x0) edge (x01);
        \draw[->,blue] (x1) edge (x01);
        \draw[->,blue] (x2) edge (x12);
        \draw[->,blue] (x1) edge (x12);
        \draw[->,blue] (x2) edge (x23);
        \draw[->,blue] (x3) edge (x23);
        \node (p01) at (1,-1) {$P_{01}$};
        \node (p12) at (3,-1) {$P_{12}$};
        \node (p23) at (5,-1) {$P_{23}$};
        \draw[<-,dashed] (x0) edge (p01);
        \draw[<-,dashed] (x1) edge (p01);
        \draw[<-,dashed] (x1) edge (p12);
        \draw[<-,dashed] (x2) edge (p12);
        \draw[<-,dashed] (x2) edge (p23);
        \draw[<-,dashed] (x3) edge (p23);

        \node (p02) at (2,-2) {$P_{02}$};
        \node (p13) at (4,-2) {$P_{13}$};
        \draw[->,dashed] (p02) edge (p12);
        \draw[->,dashed] (p02) edge (p01);
        \draw[->,dashed] (p13) edge (p12);
        \draw[->,dashed] (p13) edge (p23);

        \node (p03) at (3,-3) {$P_{03}$};
        \draw[->,dashed] (p03) edge (p13);
        \draw[->,dashed] (p03) edge (p02);

        \node (q02) at (2,2) {$Q_{02}$};
        \node (q13) at (4,2) {$Q_{13}$};
        \draw [->,dashed] (x12) edge (q02);
        \draw [->,dashed] (x01) edge (q02);
        \draw [->,dashed] (x12) edge (q13);
        \draw [->,dashed] (x23) edge (q13);

        \node  (q03) at (3,3) {$Q_{03}$};
        \draw[->,dashed] (q02) edge (q03);
        \draw[->,dashed] (q13) edge (q03);

     \end{tikzpicture}
\caption{The diagram of a normalized zig-zag persistence module completed with the correspondent pullbacks and pushouts.}
\label{figzig}       
\end{figure}     

\noindent The implication operation can now be identified. For instance $(\X_{0}\Rightarrow \X_{1}) = Q_{13}$, $(\X_{1}\Rightarrow \X_{2}) = \X_{23}$ and $(\X_{2}\Rightarrow \X_{3}) = \X_{3}$.
In general, the implication operation between two arbitrary spaces $A=\X_{xy}$ and $B=\X_{zw}$ in a normalized zig-zag module is defined as follows:
 
\begin{theorem}\label{zig}
Let $\HH$ be the underlying Heyting algebra of a normalized zig-zag module and let $\X_k$ and $\X_{ij}$ be arbitrary unrelated vector spaces in the input zig-zag diagram. 
In the related cases we get

\begin{itemize}
\item[] $(\X_i\Rightarrow \X_{ik}) = Q_{0n}$;
\item[] $(\X_i\Rightarrow \X_{ki}) = Q_{0n}$;
\item[] $(\X_{ik}\Rightarrow \X_i) = X_{ik}$;
\item[] $(\X_{ki}\Rightarrow \X_i) = X_{ki}$.
\end{itemize}

If $i\leq j$ then 

\begin{itemize}
\item[] $(\X_i \Rightarrow \X_j) = Q_{jn}$;
\item[] $(\X_j \Rightarrow \X_i) = Q_{0i}$;
\item[] $(\X_{jk} \Rightarrow \X_{ir}) = Q_{0r}$;
\item[] $(\X_{ir} \Rightarrow \X_{jk}) = Q_{jn}$.
\end{itemize}

Note that $Q_{0n}=\X_0$, $Q_{nn}=\X_n$, $Q_{01}=\X_{01}$ and $Q_{(n-1)n}=\X_{(n-1)n}$.
Moreover, $(\X_0\Rightarrow \X_n)=\X_n$ and that $(\X_n\Rightarrow \X_0)=\X_0$
Furthermore, If $i<j$ then

\begin{itemize}
\item[] $(\X_i \Rightarrow \X_{jr})=Q_{jn}$;
\item[] $(\X_j \Rightarrow \X_{ir})=Q_{0r}$;
\item[] $(\X_{jr} \Rightarrow \X_{i})=Q_{0i}$;
\item[] $(\X_{ir} \Rightarrow \X_{j})=Q_{jn}$.
\end{itemize}

This analysis describes all the possible cases.

\end{theorem}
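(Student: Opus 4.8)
The plan is to reduce the whole statement to two ingredients: the defining property of implication established in the Interpretation subsection, namely that $A \Rightarrow B$ is the greatest element $X$ with $(A \meet X) \leq B$ (equivalently the join of all $C$ admitting a linear map $(C \meet A) \rto B$), together with the structural observation, already flagged in the text, that the completed normalized zig-zag lattice is combinatorially the same diamond grid as the bifiltration lattice of Figure \ref{figmultidimx}. First I would make this last point precise: adjoining the pullbacks $P_{ij}$ as iterated meets below the spine $\X_0, \X_{01}, \dots, \X_n$ and the pushouts $Q_{ij}$ as iterated joins above it yields a finite lattice whose Hasse diagram coincides, up to relabelling, with that of the $m \times n$ bifiltration, the $Q_{ij}$ occupying the upper half-diamond and the $P_{ij}$ the lower half. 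Once this identification is in place, the implication can be read off exactly as in the proof of Theorem \ref{bif}.

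With the reduction in hand I would split into the related and unrelated cases. For comparable elements I would invoke the totally ordered behaviour of Theorem \ref{std}: if $A \leq B$ then $A \Rightarrow B = \top$, and if $B \leq A$ then $A \Rightarrow B = B$. The only genuine task here is bookkeeping, identifying $\top$ and the relevant spaces with their $Q$/$P$ names, which produces the four related formulas together with the boundary identifications such as $Q_{0n} = \X_0$ and $Q_{01} = \X_{01}$. For the unrelated cases I would mirror the bifiltration argument: using the characterisation of $A \Rightarrow B$ as the maximal $X$ with $(A \meet X) \leq B$, I would compute the meet $A \meet X$ through the pullback part of the lattice and then locate the highest pushout $Q_{ij}$ whose meet with $A$ still factors through $B$. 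Tracking the two index directions, one frozen and one pushed to its extreme exactly as $\X_{mw}$ arose in Theorem \ref{bif}, should yield the claimed values $Q_{jn}$, $Q_{0i}$, $Q_{0r}$ and the rest.

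The hard part will be the meet computations inside the zig-zag lattice. Unlike a totally ordered filtration, meets here are genuine pullbacks, so verifying that the claimed $Q_{ij}$ is truly maximal requires checking that the meet of $A$ with the next-higher pushout fails to factor through $B$; this is the step that pins down the precise index rather than merely an upper bound, and it is the analogue of singling out $\X_{mw}$ in the bifiltration. The alternating arrow directions of the normalized module, and the distinct roles played by the vertex spaces $\X_i$ versus the edge spaces $\X_{ij}$, make the enumeration of subcases more intricate than in the multidimensional setting, so the bulk of the effort is organising these subcases so that each reduces cleanly to the single maximal-$X$ computation. After that, the concluding remark that the analysis covers every case follows from exhaustiveness of the possible order relations between an $\X_k$ and an $\X_{ij}$.
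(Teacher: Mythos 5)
First, a point of reference: the paper gives no proof of Theorem~\ref{zig} at all --- the statement follows three worked examples and the remark that the completed zig-zag diagram is ``similar to the one achieved in the multidimensional case'' --- so your proposal is not being measured against an argument in the text; it would be supplying the missing one. Your overall strategy is the right one: characterise $A\Rightarrow B$ as the largest $X$ with $A\meet X\leq B$, identify the completion of the normalized zig-zag (spine $\X_0,\X_{01},\dots,\X_n$, pullbacks $P_{ij}$ below, pushouts $Q_{ij}$ above) with the grid lattice of Figure~\ref{figmultidimx}, and then compute. That identification is correct and worth making explicit: sending $P_{ij}\mapsto(n-j,\,i)$ and $Q_{ij}\mapsto(n-i,\,j)$, with $P_{ii}=Q_{ii}=\X_i$ on the antidiagonal, exhibits the completed lattice as the product of two $(n+1)$-element chains.

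The genuine gap is in your treatment of comparable pairs. You propose to ``invoke the totally ordered behaviour of Theorem~\ref{std}: if $B\leq A$ then $A\Rightarrow B=B$.'' That identity holds in a chain but not in a general Heyting algebra, and it fails here: in the zig-zag lattice one computes $\X_{12}\Rightarrow\X_1=Q_{01}=\X_{01}$, not $\X_1$ (and not $\X_{12}$ either, as the theorem's line $(\X_{ik}\Rightarrow\X_i)=X_{ik}$ would have it --- the choice $X=\X_{ik}$ does not even satisfy $\X_{ik}\meet X\leq\X_i$). The uniform device that repairs this, and that also dissolves what you call ``the hard part,'' is that implication in a finite product of Heyting algebras is computed componentwise, and each component is a chain where Theorem~\ref{std} genuinely applies. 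Every case, related or not, then reduces to a two-line calculation, e.g.\ $\X_i\Rightarrow\X_j=(n-i,i)\Rightarrow(n-j,j)=(n-j,n)=Q_{jn}$ for $i<j$, with no separate maximality check against the next-higher pushout. Carrying this out confirms the eight ``unrelated'' identities but shows the statement itself needs repair: besides the edge-to-vertex cases just mentioned, the note $Q_{0n}=\X_0$ must read $Q_{00}=\X_0$ (as written, $Q_{0n}$ is the top element), and the same computation exposes the analogous slip $(\X_{31}\Rightarrow\X_{01})=\X_{01}$ in the bifiltration section, whose correct value is $\X_{03}$. So your plan is salvageable, but not by quoting the chain formula off the chain.
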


Let us now see how the universal laws of Remark \ref{axm} helps us with further calculus involving the implication operation by simplification of algebraic expressions.
For instance:
\begin{align*}
&(\X_0\Rightarrow (X_2\meet \X_3))=((\X_0\Rightarrow \X_1)\meet (\X_0\Rightarrow \X_3))=(Q_{13}\meet \X_3)=\X_3 \text{   due to (4); }\\
&P_{01}\Rightarrow \X_3=((\X_0\meet \X_1)\Rightarrow \X_3)=(\X_0\Rightarrow (\X_1\Rightarrow \X_3))=(\X_0\Rightarrow \X_3)=\X_3 \text{   due to (9); }\\
&((\X_0\Rightarrow \X_3)\Rightarrow ((\X_1\Rightarrow \X_3)\Rightarrow ((\X_0\join \X_1)\Rightarrow \X_3)))=1=Q_{0n} \text{   due to (12).}
\end{align*}

\subsection{Applications}

In this section, we present the reader with two applications of the theoretic work we developed above. 
Let us first consider the implication operation in standard persistence.
In this case, we are dealing with a totally ordered set of vector spaces and linear maps. 
We can use the implication operation to obtain the most persistent feature, with one arbitrary vector space in the filtration: when taking an element, $A$, of the given input diagram, the computation of $A\Rightarrow A$ gives us the greatest element, $\top$, on the totally ordered set.
When choosing two vector spaces in the filtration we are able to tell their order and compute the element of greatest persistence feature in the following sense: take two vector spaces $A$ and $B$; verify if $B$ is the greatest element of the totally ordered set by analyzing the equality $B=(B\Rightarrow B)$; in the negative case $A\Rightarrow B$ will output 
\begin{itemize}
\item[(1)] the greatest element of the totally ordered set $\top$ whenever $A\leq B$;
\item[(2)] or else it will tell us that $B\leq A$ by outputting $B$.
\end{itemize}
This situation is illustrated in the diagram below.

\[
 \perp \longrightarrow  \ldots \longrightarrow  \X_i \longrightarrow A
 \longrightarrow \X_j \longrightarrow \ldots \longrightarrow  B
 \longrightarrow \X_k \longrightarrow \ldots \longrightarrow \top
\]

\noindent Due to Remark \ref{axm}, $A\Rightarrow B$ already gives us $\top$ while $B\Rightarrow A$ outputs $B$.
We shall now look at this same problem as above, now in the wider context of the underlying Heyting algebra of a bifiltration.
In this case one can fix one parameter and take the filtration correspondent to the other parameter studied in the bifiltration. 
We are thus considering a totally ordered set that is always a sublattice of the underlying Heyting algebra of the given bifiltration. 
The greatest element of such a totally ordered set corresponds to the most persistent feature when one of the parameters is fixed. 
It can be computed using the implication operation as in the standard persistence case, illustrated in Figure \ref{figapp1} (a).
Furthermore, when we study incomparable elements of the bifiltration, we can also detect the implication operation between them, as illustrated in Figure \ref{figapp1} (b).
The output $X$ of the implication $A\Rightarrow B$ is a vector space in the persistence lattice corresponding to the biggest element such that a linear map $(A\meet X) \rightarrow B$ exists.

\begin{figure}
\centering
\subfigure[]{%
\includegraphics[width = 0.3\textwidth,page=1]{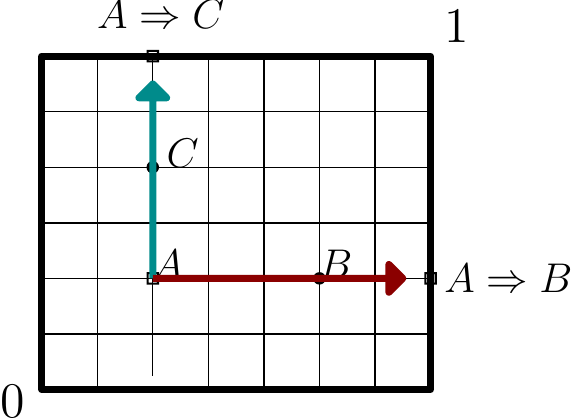}
}     
 \quad
 \subfigure[]{%
 \includegraphics[width = 0.3\textwidth,page=1]{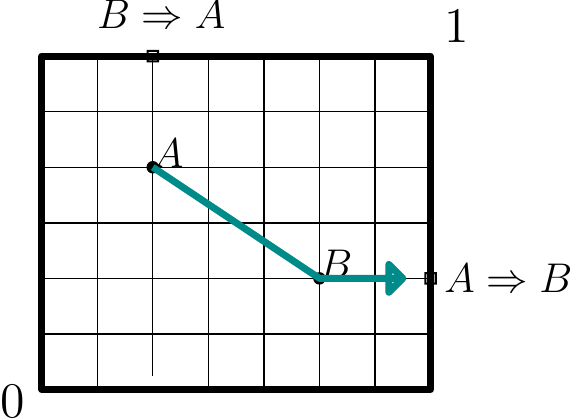}
}
\caption{Interpretation of the implication operation in the framework of persistence when elements of a bifiltration are related (a) and unrelated (b).}
\label{figapp1}       
\end{figure}

Let us look now at the zig-zag persistence case by considering the normalized zig-zag diagram of length 3 given by $\X_{0} \rightarrow \X_{01}\leftarrow \X_{1}\rightarrow \X_{12}\leftarrow \X_{2}\rightarrow \X_{23}\leftarrow \X_{3}$ with the diagram above in subsection \ref{Zig-zag case}.
Let $A$ be $\X_0$ and $B$ be $\X_1$. Then, $A\Rightarrow B$ is $Q_{13}$ which we will call $C$.
Now usually, using the established techniques of zig-zag persistence, we would look at the zig-zag onwards to $\X_{12}$ , $X_2$, $X_{23}$ and $X_3$  to see if we could extend a class which is in both $A$ and $B$ to the other spaces.
With the implication operation, we are able to look at the filtration (as the totally ordered set) from $(A\meet C)$ to $C$.
Recall that $A\meet C=A\meet B$ so that, in this case, the filtration is
\[ (A\meet B) \rto B \rto X_{12} \rto Q_{13}.\]
This sequence contains all the information for bars starting from $A$ and persists to $B$ and onwards.
It is represented in the Hasse diagram of Figure \ref{figapp2}, correspondent to the underlying Heyting algebra including the pullback and pushout constructions denoted by $P_i$ and $Q_i$, respectively. 

\begin{figure}
\centering
     \begin{tikzpicture}[scale=0.8]
       \node (x0) at (0,0) {\textcolor{red}{$A$}};
        \node (x1) at (2,0) {\textcolor{red}{$B$}};
        \node (x2) at (4,0) {$\X_2$};
        \node (x3) at (6,0) {$\X_3$};
       \node (x01) at (1,1) {$\X_{01}$};
        \node (x12) at (3,1) {\textcolor{red}{$\X_{12}$}};
        \node (x23) at (5,1) {$\X_{23}$};
        \draw[->] (x0) edge (x01);
        \draw[->] (x1) edge (x01);
        \draw[->] (x2) edge (x12);
        \draw[->,red, very thick] (x1) edge (x12);
        \draw[->] (x2) edge (x23);
        \draw[->] (x3) edge (x23);
        \node (p01) at (1,-1) {$P_{01}$};
        \node (p12) at (3,-1) {$P_{12}$};
        \node (p23) at (5,-1) {$P_{23}$};
        \draw[<-,dashed] (x0) edge (p01);
        \draw[<-,dashed] (x1) edge (p01);
        \draw[<-,dashed] (x1) edge (p12);
        \draw[<-,dashed] (x2) edge (p12);
        \draw[<-,dashed] (x2) edge (p23);
        \draw[<-,dashed] (x3) edge (p23);

        \node (p02) at (2,-2) {$P_{02}$};
        \node (p13) at (4,-2) {$P_{13}$};
        \draw[->,dashed] (p02) edge (p12);
        \draw[->,dashed] (p02) edge (p01);
        \draw[->,dashed] (p13) edge (p12);
        \draw[->,dashed] (p13) edge (p23);

        \node (p03) at (3,-3) {$P_{03}$};
        \draw[->,dashed] (p03) edge (p13);
        \draw[->,dashed] (p03) edge (p02);

        \node (q02) at (2,2) {$Q_{02}$};
        \node (q13) at (4,2) {\textcolor{red}{$(A\Rightarrow B)$}};
        \draw [->,dashed] (x12) edge (q02);
        \draw [->,dashed] (x01) edge (q02);
        \draw [->,dashed,red,very thick] (x12) edge (q13);
        \draw [->,dashed] (x23) edge (q13);

        \node  (q03) at (3,3) {$Q_{03}$};
        \draw[->,dashed] (q02) edge (q03);
        \draw[->,dashed] (q13) edge (q03);

     \end{tikzpicture}
\caption{Interpretation of the implication operation in the framework of persistence for the zig-zag persistence case.}
\label{figapp2}       
\end{figure}
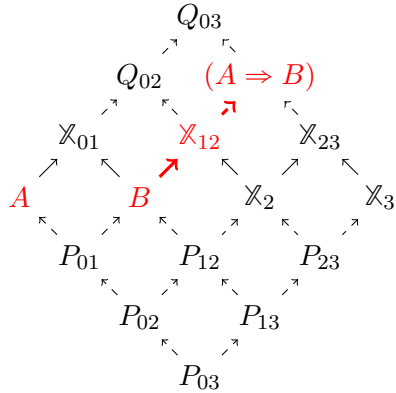

\noindent In general, considering a zig-zag diagram of length $n$. If $A$ is $\X_0$ and $B$ is $\X_1$ we get the following sequence:
\[ (A\meet B) \rto B \rto \X_{12} \rto Q_{13} \rto Q_{14} \rto \ldots \rto Q_{1n}. \]
The part $(A\meet B)$ to $B$ contains complete information about classes which are alive at $A$. From $B$ onwards the information is partial. 
It contains all the classes which potentially persist everywhere and includes some other classes as given (for example consider that the class dies in $X_{12}$, then it is in the cokernel of $X_2 \rto X_{12}$  and so would be in $Q_{13}$). 
Hence, we can look at the implication operation in the zig-zag case as the smallest filtration which contains the classes which persist everywhere.


\section{Discussion}

The internal logic aspects that can be retrieved from the order structure of the underlying algebra of a given diagram of vector spaces and linear maps can provide us of information on persistence.
The contributions of this paper follow the novel ideas of \cite{Skr13} and \cite{SoCG14}. 
With this study we present the reader with some operations that can be defined on standard, zig-zag and multidimensional persistence, and that make sense in connection to formal logic structures that relate to that framework.

While not  a complete study on these matters, in this paper we
highlight new tools which available through lattice structures (cf. \cite{Ba40} and \cite{CatCS}).
These constructive ideas and correspondent constructive logic
represent an alternate viewpoint. 
Due to their computational nature, they can motivate several new
algorithmic applications in which we will focus on in future work.
Moreover, this research represents a step in the clarification of a
topos foundation for the theory of persistence.



\bibliographystyle{plain}



\end{document}